\def\qed{~~\vrule height8pt width4pt depth0pt}
\def\F2{{\mathbb F}_2}
\newtheorem{definition}{Definition}
\newtheorem{lemma}{Lemma}
\newtheorem{thm}{Theorem}
\begin{document}

\title {Combinatorial meaning of the number of the even parts in a partition of $n$ into distinct parts
}
 \author{
Jiyou Li\\
School of Mathematical Sciences\\
Shanghai Jiao Tong University\\
Shanghai, China, 200240\\
Email:~lijiyou@sjtu.edu.cn\\
Sicheng Zhao\\
School of Mathematical Sciences\\
Shanghai Jiao Tong University\\
Shanghai, China, 200240\\
Email:~zsc\_abandoned@sjtu.edu.cn
 }
\maketitle

\begin{abstract}

In a recent paper \cite{evenparts}, Andrews and Merca investigated
the number of even parts in all partitions of $n$ into distinct parts, which arise naturally from the Euler-Glaisher bijective proof. They
 obtained new combinatorial interpretations for this number by using generating functions.  We obtain a  direct combinatorial proof  in this note.

\end{abstract}
 \section{Introduction}

For a positive integer $n$, its partition $\lambda$ is a multi-set of positive integers $\lambda=\{\lambda_1,\cdots,\lambda_k\}$ such that  $\sum\limits_{i=1}^k \lambda_i=n$. Each number $\lambda_i$ is called a part of $\lambda$. For simplicity, subscripts are used to denote the multiplicity of a part in a partition. For example, $\{5_2,2_2,1\}$ denotes $\{5,5,2,2,1\}$. Also, for two partitions $\lambda=\{\lambda_1,\cdots,\lambda_k\}$ and $\mu=\{\mu_1,\cdots,\mu_t\}$, $\lambda+\mu$  is defined to be $\{\lambda_1,\cdots,\lambda_k\}\bigcup\{\mu_1,\cdots,\mu_t\}$ as a new partition.

A partition is called odd if all of its parts are odd and
distinct if all of its parts are distinct.
One of the first and most beautiful results in partition theory is  that the number of odd partitions of $n$ is equal to the number of distinct partitions of $n$. This is well known as Euler's partition identity.
  A combinatorial proof by doubling and halving naturally shows the correspondence
  and it is regarded as a typical example of elegant combinatorial proofs.

  Precisely, recall  the Glaisher's map $\varphi$ from the set of distinct partitions to the set of odd partitions is constructed as follows.
    For any distinct partition $\lambda=\{2^{k_1}(2s_1-1),\cdots,2^{k_m}(2s_m-1)\}$ with distinct pairs $(k_i,s_i)$ for $1\leq i\leq m$, $\varphi$ maps it to  $\varphi(\lambda)=\{(2s_1-1)_{2^{k_1}}\}+\cdots+\{(2s_m-1)_{2^{k_m}}\}$. For example, the Glaisher's map $\varphi$ sends the partition  $\{12,7,6,5,3\}$ to  the partition $\{7,5,3_7\}$. This elegant  map  plays an important role in our proof (Section 3).

In \cite{evenparts}, Andrews and Merca investigated
the number of even parts in all partitions of $n$ into distinct parts and they
 obtained new combinatorial interpretations for this number by using generating functions. They asked if there is any direct combinatorial proof.

 Two elegant combinatorial proofs were given by
Ballantine and  Merca \cite{combinaproof1}, and  Li  and  Wang \cite{combinaproof2} independently. In this paper, we provide a new combinatorial proof.

\begin{definition}\label{1.1}
Let $n$ be a non-negative integer. Let $N(n)$ be the number of partitions of $n$ into distinct parts and $N_{\hat{m}}(n)$ be the number of partitions of $n$ into distinct parts avoiding $m$.
\end{definition}

      For convenience, $N(n)$ and $N_{\hat{m}}(n)$ are defined to be zero when $n<0$. For simplicity, a {\bf distinct partition} of $n$ is the abbreviation for a partition of $n$ into distinct parts and similarly an {\bf odd partition} of $n$ is the abbreviation for a partition of $n$ with all parts odd.

\begin{definition}\label{1.2}
Let $n$ be a non-negative integer. Define $a(n)$ to be the number of even
parts in all distinct partitions of $n$.
\end{definition}

For instance, since $6=5+1=4+2=3+2+1$, one has $a(6)=4$.

\begin{definition}\label{1.3}
Let $n$ be a non-negative integer. We define:

1. $B_o(n)$ to be the set of partitions of $n$ into an odd number of parts in which the set of even part has only one element;

2. $B_e(n)$ to be the set of partitions of $n$ into an even number of parts in which the set of even part has only one element;

3. $b_o(n)=|B_o(n)|$, $b_e(n)=|B_e(n)|$, $b(n)=b_o(n)-b_e(n)$.
\end{definition}

Clearly, $B_o(6)=\{\{6\}, \{4, 1_2\}, \{3, 2, 1\}, \{2_3\}, \{2, 1_4\}\}$,  $B_e(6)=\{\{2_2,1_2\}\}$ and thus $b(6)=4$.

\begin{definition}\label{1.4}
Let $n$ be a non-negative integer. We define:

1. $C_o(n)$ to be the set of partitions of $n$ in which exactly one part is repeated and this part is odd;

2. $C_e(n)$ to be the set of partitions of $n$ in which exactly one part is repeated and this part is even;

3. $c_o(n)=|C_o(n)|$, $c_e(n)=|C_e(n)|$, $c(n)=c_o(n)-c_e(n).$
\end{definition}

Similarly, one checks that  $c_o(6)=5$, $c_e(6)=1$ and thus $c(6)=4$.

Beck conjectured that the number of partitions of $n$ in which the set of even parts has exactly one element equals the number of partitions of $n$ in which exactly one part is repeated, namely $b_o(n)+b_e(n)=c_o(n)+c_e(n)$. The conjecture was proved in \cite{1} and \cite{combinaproof}. Furthermore, the following two theorems in \cite{evenparts} given by  Andrews and  Merca provided more insights on this identity.
\begin{thm}\label{1.5}
 Suppose that $a(n),b(n),c(n)$ are defined as above. Then for $n \geq 1$, $a(n)=(-1)^n b(n)=c(n)$.
\end{thm}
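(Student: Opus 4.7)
My plan is to prove both identities by invoking Glaisher's bijection $\varphi$ to reparametrize the relevant sets through odd partitions of $n$, reducing each to an elementary identity on part multiplicities.

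For $a(n)=c(n)$: a pair $(\lambda,e)\in A$ with $e=2^k r$ ($r$ odd, $k\ge 1$) corresponds, via $\mu=\varphi^{-1}(\lambda)$, to a triple $(\mu,r,k)$ in which the $k$-th binary digit of $m_r(\mu)$ equals $1$. Each $\pi=\pi'\cup\{r^{M'}\}\in C_o$ corresponds, via $\mu=\varphi^{-1}(\pi')\cup\{r^{M'}\}$, to a triple $(\mu,r,M')$ with $M'\in[2,m_r(\mu)]$ and $m_r(\mu)-M'$ even (encoding $r\notin\pi'$); likewise $C_e$ bijects with quadruples $(\mu,r,k,m)$ satisfying $k\ge 1$, $m\ge 2$, $m\cdot 2^k\le m_r(\mu)$, and $\mathrm{bit}_k(m_r(\mu)-m\cdot 2^k)=0$. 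The identity $a(n)=c_o(n)-c_e(n)$ then collapses, for each fixed $(\mu,r)$ with $M=m_r(\mu)$, to
\[
\#\{M':2\le M'\le M,\,M'\equiv M\pmod 2\}=\#\{(k,m):k\ge 1,\,m\ge 1,\,m\cdot 2^k\le M,\,\mathrm{bit}_k(M-m\cdot 2^k)=0\},
\]
where the case $m=1$ accounts for $A$ (since $\mathrm{bit}_k(M-2^k)=0$ iff $\mathrm{bit}_k(M)=1$) and $m\ge 2$ for $C_e$. Both cardinalities equal $\lfloor M/2\rfloor$; a direct bijection sends $\ell\in\{1,\ldots,\lfloor M/2\rfloor\}$, written uniquely as $\ell=2^{k-1}(2t-1)$, to the pair $(k,m)$ with $m\in[1,\lfloor M/2^k\rfloor]$, $m\equiv\lfloor M/2^k\rfloor\pmod 2$, and $\lceil m/2\rceil=t$, and matches $\ell$ with $M'=M+2-2\ell$.

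For $a(n)=(-1)^n b(n)$: each $\pi\in B_o\cup B_e$ has the form $\{e^m\}\cup\nu$ with $e$ even, $m\ge 1$, and $\nu$ an odd partition of $n-me$. Since $r+1$ is even for every odd part $r$, the sum $|\nu|+\#\nu=\sum_i(r_i+1)$ is even, so $(-1)^{n+\#\pi+1}=(-1)^{m+1}$. Hence $(-1)^n b(n)=\sum_{e,m}(-1)^{m+1}N(n-me)$ by Euler's identity equating odd and distinct partition counts. A sign-reversing involution on pairs $(m,\lambda)$ (with $\lambda$ a distinct partition of $n-me$ and $m\ge 1$) defined by toggling the part $e$---mapping $(m,\lambda)\mapsto(m+1,\lambda\setminus\{e\})$ if $e\in\lambda$ and $(m,\lambda)\mapsto(m-1,\lambda\cup\{e\})$ if $e\notin\lambda$ and $m\ge 2$---has fixed points exactly $(1,\lambda)$ with $e\notin\lambda$. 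These fixed points identify via $\lambda\mapsto\lambda\cup\{e\}$ with distinct partitions of $n$ containing $e$; summing over even $e$ yields $a(n)$.

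The main obstacle will be the fixed-$M$ bookkeeping in the $a=c$ step: one must seamlessly merge the $m=1$ tuples (encoding the 1-bits of $M$ at positions $k\ge 1$, i.e.\ the $A$-contribution) with the $m\ge 2$ tuples (the $C_e$-contribution) under a single bijection, using the parity shift $\lceil m/2\rceil$ to keep the alignment natural; the underlying count identity $\sum_{k\ge 1}\lceil\lfloor M/2^k\rfloor/2\rceil=\lfloor M/2\rfloor$ can be verified inductively from $\lceil N/2\rceil+\lfloor N/2\rfloor=N$.
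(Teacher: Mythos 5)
Your proposal is correct, and it takes a genuinely different route from the paper. The paper introduces auxiliary sets $B_o'(n),B_e'(n)$ (one even part value, repeated an odd resp.\ even number of times), proves a parity lemma identifying them with $B_o(n),B_e(n)$ according to the parity of $n$, and then expresses $b_o',b_e',c_o,c_e$ as sums of $N(\cdot)$ and $N_{\hat{m}}(\cdot)$, finishing via the alternating-sum identity $N_{\hat{m}}(n)=\sum_{i\geq 0}(-1)^iN(n-im)$ and a telescoping coefficient count (its Section~3 proof instead builds explicit injections $B_o'(n)\to C_o(n)$ and $B_o'(n)\to B_e'(n)\cup A(n)$ by case analysis on whether a certain part lies in $\varphi(\cdot)$). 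You bypass $B'$ entirely: (i) you get $a(n)=c(n)$ directly by transporting $A(n)$, $C_o(n)$, $C_e(n)$ through Glaisher's map to data attached to an odd partition $\mu$, namely the binary digits of the multiplicities $m_r(\mu)$, so the identity localizes, for fixed $(\mu,r)$ with $M=m_r(\mu)$, to the elementary count $\sum_{k\geq 1}\lceil\lfloor M/2^k\rfloor/2\rceil=\lfloor M/2\rfloor$; I checked the parametrizations (including the $m=1$ versus $m\geq 2$ split separating $A$ from $C_e$ via $\mathrm{bit}_k(M-2^k)=0\iff\mathrm{bit}_k(M)=1$, and the per-$k$ count $\lceil\lfloor M/2^k\rfloor/2\rceil$ coming from the parity condition $m\equiv\lfloor M/2^k\rfloor\pmod 2$) and they are sound; (ii) you get $(-1)^nb(n)=a(n)$ by noting the sign $(-1)^{n+\#\pi+1}$ depends only on the multiplicity $m$ of the unique even part, then collapsing $\sum_{m\geq 1}(-1)^{m+1}N(n-me)$ with a toggle-the-part-$e$ involution whose fixed points count distinct partitions of $n$ containing $e$. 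Step (ii) is essentially a bijectivized version of the paper's Lemma~\ref{2.1} combined with Lemma~\ref{2.3}; step (i) is genuinely different in mechanism from both of the paper's arguments. What the paper's route buys is the refined Theorem~\ref{1.6} (it proves $b_o'=c_o$ and $b_e'=c_e$ separately, which your merged count does not), and its Section~3 bijection works for an arbitrary odd--distinct bijection $\varphi$, whereas your $a=c$ step needs the specific 2-adic structure of Glaisher's map; what your route buys is the elimination of the auxiliary sets and of the $N_{\hat{m}}$ bookkeeping, reducing Theorem~\ref{1.5} to a one-line numerical identity checked independently at each odd part.
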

\begin{thm}\label{1.6}
Suppose that $b_o(n),b_e(n),c_o(n),c_e(n)$ are defined as above, then we have

1. $b_o(n)=$
$\left\{
\begin{array}{cc}
c_o(n),\ & if\ n\ is\ even,    \\
c_e(n),\ & if\ n\ is\ odd; \\
\end{array}
\right.$

2. $b_e(n)=$
$\left\{
\begin{array}{cc}
c_e(n),\ & if\ n\ is\ even, \\
c_o(n),\ & if\ n\ is\ odd. \\
\end{array}
\right.$
\end{thm}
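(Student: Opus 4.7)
The plan is to derive Theorem~\ref{1.6} from Theorem~\ref{1.5} combined with Beck's identity $b_o(n)+b_e(n)=c_o(n)+c_e(n)$, which is already established in \cite{1, combinaproof} as noted above. Once both ingredients are in hand, the refined identities of Theorem~\ref{1.6} fall out by a short linear manipulation keyed to the parity of $n$.

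Concretely, I would first rewrite the equality $(-1)^n b(n)=c(n)$ of Theorem~\ref{1.5} as $(-1)^n\bigl(b_o(n)-b_e(n)\bigr)=c_o(n)-c_e(n)$. When $n$ is even this reads $b_o(n)-b_e(n)=c_o(n)-c_e(n)$; adding and subtracting with Beck's identity $b_o(n)+b_e(n)=c_o(n)+c_e(n)$ immediately give $b_o(n)=c_o(n)$ and $b_e(n)=c_e(n)$, which are the ``$n$ even'' rows of parts 1 and 2. When $n$ is odd the rewrite becomes $b_e(n)-b_o(n)=c_o(n)-c_e(n)$; the same addition and subtraction with Beck's identity now yield $b_e(n)=c_o(n)$ and $b_o(n)=c_e(n)$, i.e.\ the ``$n$ odd'' rows.

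The main obstacle therefore lies upstream, in furnishing a direct combinatorial proof of Theorem~\ref{1.5}, which I would supply by a Glaisher-based bijection. Given $\lambda=\{v_r\}\cup\mu\in B(n)$ with unique even value $v=2^k m$ ($k\geq 1$, $m$ odd) of multiplicity $r$ and odd partition $\mu$, applying $\varphi^{-1}$ to $\mu$ converts the odd parts into a distinct partition; the $\{v_r\}$-block is then either kept intact (producing a repeated even part, so the image lies in $C_e(n)$) or ``halved'' down to $2^k$ copies of $m$ and merged with the $m$-parts already produced by $\varphi^{-1}(\mu)$ (producing a repeated odd part, so the image lies in $C_o(n)$). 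The delicate point, which also explains the parity dependence in Theorem~\ref{1.6}, is that $|\lambda|=r+|\mu|\equiv r+n\pmod 2$ since $rv$ being even forces $|\mu|\equiv n\pmod 2$; thus the $B_o/B_e$ classification is controlled jointly by the parities of $r$ and $n$, and the bijection must be arranged so that the choice between keeping and halving $v$ is dictated precisely by these parities. Showing that this rule is a genuine bijection (rather than a many-to-one map, which a naive $T=r\cdot 2^k+s$ encoding of the $m$-multiplicity would produce) is the hardest step, and it is exactly this parity matching that yields the case split appearing in Theorem~\ref{1.6}.
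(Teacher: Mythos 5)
Your reduction of Theorem \ref{1.6} is correct: rewriting Theorem \ref{1.5} as $(-1)^n\bigl(b_o(n)-b_e(n)\bigr)=c_o(n)-c_e(n)$ and adding/subtracting Beck's identity $b_o(n)+b_e(n)=c_o(n)+c_e(n)$ (proved in \cite{1} and \cite{combinaproof}) does yield $b_o(n)=c_o(n)$, $b_e(n)=c_e(n)$ for even $n$ and $b_o(n)=c_e(n)$, $b_e(n)=c_o(n)$ for odd $n$. But this is a genuinely different route from the paper's. The paper never uses Beck's identity as an input: it proves Theorems \ref{1.5} and \ref{1.6} simultaneously by first trading the parity of the number of parts for the parity of the multiplicity of the unique even part (Lemma \ref{2.3}), and then establishing $b_o^{'}(n)=c_o(n)$ and $b_e^{'}(n)=c_e(n)$ directly, either through the recursion of Lemma \ref{2.1} applied to the $N_{\hat{m}}$-expansions or through the Glaisher-based injections of Section 3. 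That argument is self-contained, matches the paper's stated aim of avoiding imported results, and recovers Beck's identity (and the $b$--$c$ part of Theorem \ref{1.5}) as byproducts; your route buys brevity at the cost of importing Beck's identity from the literature and of resting on Theorem \ref{1.5}, which in this paper is not proved before Theorem \ref{1.6} but together with it.

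That dependence is also where your proposal is incomplete on its own terms: you acknowledge that the real work is a proof of Theorem \ref{1.5}, and your sketched bijection for it is not yet an argument. You do not specify how the choice between keeping the block of the even value $v=2^km$ and ``halving'' it is made so that the map is injective, and you concede that bijectivity is the hardest step. Note also that halving $\{v_r\}$ produces $r\cdot 2^k$ copies of $m$, not $2^k$ copies. The paper resolves exactly this difficulty differently: it matches $B_o^{'}(n)$ with $C_o(n)$ (rather than all of $B$ with $C_o\cup C_e$), swapping the part $2k$ with its multiplicity $2s-1$ and adjusting by one copy according to whether $2s-1$ already occurs in $\varphi(\{2a_1-1,\cdots,2a_m-1\})$. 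If Theorem \ref{1.5} is taken as known (it is the main theorem of \cite{evenparts}), your first two paragraphs do constitute a complete proof of Theorem \ref{1.6}; if it is not, the gap you flag remains open in your write-up.
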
We have the following example.
\begin{align*}
&    B_o(7)=\{\{3,2_2\},\{2_2,1_3\}\}, \\
 &  B_e(7)=\{\{6,1\},\{5,2\},\{4,3\},\{4,1_3\},\{3,2,1_2\},\{2_3,1\},\{2,1_5\}\},\\
  & C_o(7)=\{\{3_2,1\},\{5,1_2\},\{3,2,1_2\},\{4,1_3\},\{3,1_4\},\{2,1_5\},\{1_7\}\},\\
  & C_e(7)=\{\{2_3,1\},\{3,2_2\}\}.
\end{align*}
 The distinct partitions of $7$ are $\{7\},\{6,1\},\{5,2\},\{4,3\},\{4,2,1\}$, so that  $a(7)=0+1+1+1+2=5=-b(7)=c(7)$. Thus the theorems hold for $n=7$.

             This paper is organized as follows. We will first give proofs of Theorem \ref{1.5} and \ref{1.6} without using generating functions in Section 2 and  direct combinatorial proofs in Section 3.

\section{Proof of the main results}
\begin{lemma}\label{2.1}
Suppose that $N(n)$ and $N_{\hat{m}}(n)$ are defined as above. Then  $N_{\hat{m}}(n)=\sum\limits_{i\geq 0}(-1)^i N(n-im)$.
\end{lemma}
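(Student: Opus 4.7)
My strategy is to reduce the identity to a very simple one-step recurrence, and then iterate it. The whole proof rests on one trivial bijection: distinct partitions of $n$ that \emph{contain} $m$ as a part correspond to distinct partitions of $n-m$ that \emph{avoid} $m$, via deletion/insertion of a single part equal to $m$.

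Concretely, I would first split the set of distinct partitions of $n$ according to whether $m$ appears as a part. Partitions with no part equal to $m$ are counted by $N_{\hat m}(n)$ by definition. For a distinct partition of $n$ containing $m$, removing the part $m$ leaves a partition of $n-m$ whose parts are still distinct and none of which equals $m$; this map is clearly reversible, so these partitions are counted by $N_{\hat m}(n-m)$. With the convention $N_{\hat m}(k)=0$ for $k<0$, this gives
\[
N(n) \;=\; N_{\hat m}(n) + N_{\hat m}(n-m),
\]
or equivalently the basic recurrence
\[
N_{\hat m}(n) \;=\; N(n) - N_{\hat m}(n-m).
\]

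I would then iterate: applying the same recurrence to $N_{\hat m}(n-m)$ gives $N_{\hat m}(n) = N(n) - N(n-m) + N_{\hat m}(n-2m)$, and continuing yields the alternating telescoping sum
\[
N_{\hat m}(n) \;=\; \sum_{i\ge 0}(-1)^i\, N(n-im).
\]
The iteration terminates once $n-im<0$, at which point every further term vanishes. I do not expect any serious obstacle; the only points that need a quick check are that the deletion/insertion map is indeed a bijection (it is, because the resulting partition is automatically distinct and automatically avoids $m$) and that the recursion bottoms out, which it does for trivial size reasons.
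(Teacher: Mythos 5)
Your proposal is correct and follows essentially the same route as the paper: the same deletion/insertion bijection yields the recurrence $N_{\hat m}(n)=N(n)-N_{\hat m}(n-m)$, which is then iterated (the paper phrases this as induction) to obtain the alternating sum.
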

\begin{proof}
We first divide the distinct partitions of $n$ into two parts: the ones including $m$ and the other $N_{\hat{m}}(n)$ ones. Observing the one-to-one correspondence between distinct partitions of $n$ including $m$ as a part and distinct partitions of $n-m$ avoiding $m$, we obtain the identity $N_{\hat{m}}(n)=N(n)-N_{\hat{m}}(n-m)$. The lemma then follows directly by induction.\qed
\end{proof}

\begin{definition}\label{2.2}
Let $n$ be a non-negative integer. We define:

1. $B_o^{'}(n)$ to be the set of partitions of $n$ in which the set of even parts has only one element, and this element is repeated for an odd number of times;

2. $B_e^{'}(n)$ to be the set of partitions of $n$ in which the set of even parts has only one element, and this element is  repeated for an even number of times;

3. $b_o^{'}(n)=|B_o^{'}(n)|$, $b_e^{'}(n)=|B_e^{'}(n)|$, $b^{'}(n)=b_o^{'}(n)-b_e^{'}(n)$.
\end{definition}

Obviously  $B_o^{'}(6)=\{\{6\}, \{4, 1_2\}, \{3, 2, 1\}, \{2_3\}, \{2, 1_4\}\}$,  $B_e^{'}(6)=\{\{2_21_2\}\}$ and thus $b'(6)=4$.

The new sets $B_o^{'}(n)$ and $B_e^{'}(n)$  are closely related to $B_o(n)$ and
$B_e(n)$  due to the following lemma.
\begin{lemma}\label{2.3}
Let $B_o(n),B_e(n),B_o^{'}(n),B_e^{'}(n)$ be defined as above. Then,

1. $B_o(n)=$
$\left\{
\begin{array}{cc}
 B_o^{'}(n),\ & if\ n\ is\ even,\\
 B_e^{'}(n),\ & if\ n\ is\ odd;\\
\end{array}
\right.$\\

2. $B_e(n)=$
$\left\{
\begin{array}{cc}
B_e^{'}(n),\ & if\ n\ is\ even,\\
B_o^{'}(n),\ & if\ n\ is\ odd.\\
\end{array}
\right.$
\end{lemma}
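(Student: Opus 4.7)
The plan is to observe that the four sets $B_o(n), B_e(n), B_o'(n), B_e'(n)$ all live inside the same universe: the set of partitions of $n$ with exactly one even value appearing (with some multiplicity). What distinguishes them is only the parity being tracked --- either the total number of parts, or the multiplicity of the unique even value. So I will prove the lemma by showing these two parities agree or disagree in a controlled way depending on the parity of $n$.

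To carry this out, I would take any partition $\lambda$ in this common universe and write it in the form $\lambda = \{(2k)_e\} + \mu$, where $2k$ is the unique even value appearing, $e \geq 1$ is its multiplicity, and $\mu$ is a partition of $n - 2ek$ into odd parts. Let $o$ be the number of parts of $\mu$. Then the total number of parts of $\lambda$ is $e + o$, and the multiplicity of the even value is $e$. The key arithmetic observation is that the sum of $o$ odd numbers has the same parity as $o$, while $2ek$ is even, so
\[
n \;\equiv\; o \pmod 2.
\]

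From this congruence, $e + o \equiv e + n \pmod 2$. Therefore, when $n$ is even, the parity of the total number of parts $e+o$ equals the parity of $e$, which gives $B_o(n) = B_o'(n)$ and $B_e(n) = B_e'(n)$. When $n$ is odd, the two parities are opposite, yielding $B_o(n) = B_e'(n)$ and $B_e(n) = B_o'(n)$. Since the universes are identical and the membership condition on each side becomes equivalent, these are actual equalities of sets, not just cardinalities.

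I do not anticipate a genuine obstacle here: the whole content is the parity bookkeeping above, and everything else is unwinding the definitions. The only point to be careful about is verifying that both $B_\bullet(n)$ and $B_\bullet'(n)$ really do require exactly one \emph{distinct} even value (so that writing $\lambda = \{(2k)_e\}+\mu$ with $\mu$ odd is unambiguous), which is immediate from Definitions \ref{1.3} and \ref{2.2}.
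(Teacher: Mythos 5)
Your proof is correct and follows essentially the same parity argument as the paper: both reduce the lemma to the observation that $n$ has the same parity as the number of odd parts, so the parity of the total number of parts and the parity of the multiplicity of the unique even value agree exactly when $n$ is even. The only difference is presentational --- you handle all four identities at once via the single congruence $n\equiv o \pmod 2$, whereas the paper writes out the case $B_o(n)=B_o'(n)$ for even $n$ and leaves the rest as "similar."
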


\begin{proof}
First we prove that $B_o(n)=B_o^{'}(n)$ if $n$ is even. For a partition $\lambda=\{(2k)_t,2a_1-1,\cdots,2a_m-1\}\in B_o(n)$, the identity $n=t\cdot 2k+\sum_{i=1}^m(2a_i-1)$ requires $m$ to be even and the definition of $B_o(n)$ requires $m+t$ to be odd. Thus $t$ is odd, and $\lambda\in B_o^{'}(n)$. On the other hand, for a partition $\lambda=\{(2k)_{2s-1},2a_1-1,\cdots,2a_m-1\}\in B_o^{'}(n)$, the identity $n=(2s-1)\cdot 2k+\sum_{i=1}^m(2a_i-1)$ requires $m$ to be even. Thus $m+(2s-1)$ is odd, and $\lambda\in B_o(n)$. We conclude that $B_o(n)=B_o^{'}(n)$ for even $n$.

The remainder of the proof is similar and we omit it. \qed
\end{proof}

{\bf Proof of  Theorem \ref{1.5} and \ref{1.6}:\ }
By Lemma \ref{2.3}, Theorem \ref{1.5} and \ref{1.6} are equivalent to
the following equalities.
\begin{equation*}\label{eq1}
b_o^{'}(n)=c_o(n),
\end{equation*}
\begin{equation*}\label{eq2}
 b_e^{'}(n)=c_e(n),
\end{equation*}
\begin{equation*}\label{eq3}
 a(n)=b^{'}(n)=c(n).
\end{equation*}
We first express $b_o^{'}(n),b_e^{'}(n),c_o(n),c_e(n)$ in terms of $N(n)$ and $N_{\hat{m}}(n)$ to show the relations among them.

For a partition $\lambda=\{(2k)_{2s-1},2a_1-1,\cdots,2a_m-1\}\in B_o^{'}(n)$ with $k,s,a_i$ being positive integers, we write $\lambda=\{(2k)_{2s-1}\}+ \{2a_1-1,\cdots,2a_m-1\}$. It implies that
\begin{equation*}
    b_o^{'}(n)=\sum_{s\geq 1}\sum_{k\geq 1}N(n-(2s-1)\cdot 2k).
\end{equation*}
Similarly,
\begin{equation*}
    b_e^{'}(n)=\sum_{s\geq 1}\sum_{k\geq 1}N(n-2s\cdot 2k).
\end{equation*}

For any partition $\lambda=\{(2s-1)_t,b_1,\cdots,b_m\}\in C_o(n)$ with $t\geq 2$ and $b_i\ne 2s-1$ being distinct positive integers, we have $\lambda=\{(2s-1)_t\}+\{b_1,\cdots,b_m\}$. One obtains that
\begin{equation*}
   c_o(n)=\sum_{t\geq 2}\sum_{s\geq 1}N_{\hat{2s-1}}(n-t\cdot (2s-1)).
\end{equation*}
Similarly,
\begin{equation*}
   c_e(n)=\sum_{t\geq 2}\sum_{s\geq 1}N_{\hat{2s}}(n-t\cdot 2s).
\end{equation*}

Applying Lemma \ref{2.1} to $c_o(n)$, we have
\begin{equation*}
    c_o(n)=\sum_{t\geq 2}\sum_{s\geq 1}\sum_{i\geq 0}(-1)^i N(n-(t+i)\cdot(2s-1)).
\end{equation*}
For fixed $2s-1$, the coefficient of $N(n-l\cdot (2s-1))(l\geq 2)$ is $\sum\limits_{2\leq t \leq l}(-1)^{l-t}$. It equals 1 when $l$ is even and 0 when $l$ is odd. Thus
\begin{equation*}
    c_o(n)=\sum_{s\geq 1}\sum_{k\geq 1}N(n-2k\cdot(2s-1))=b_o^{'}(n).
\end{equation*}

Similarly, for $c_e(n)$, we have
\begin{equation*}
    c_e(n)=\sum_{s\geq 1}\sum_{k\geq 1}N(n-2k\cdot2s))=b_e^{'}(n).
\end{equation*}

Now we come to $a(n)$. By definition,
\begin{equation*}
    b^{'}(n)=b_o^{'}(n)-b_e^{'}(n)=\sum_{t\geq 1}\sum_{k\geq 1}(-1)^{t-1}N(n-t\cdot 2k)=\sum_{k\geq 1}N_{\hat{2k}}(n-2k).
\end{equation*}

Note that $\sum\limits_{k\geq 1}N_{\hat{2k}}(n-2k)$ equals the number of  pairs  (a distinct partition of $n$, an even part in it). By definition, $b^{'}(n)=a(n)$ and the proof is complete.

\section{A Combinatorial Proof}

Since our proof of Lemma \ref{2.3} is combinatorial, it suffices to establish the
following equalities combinatorially.
 \begin{equation}\label{eq1}
b_o^{'}(n)=c_o(n),
\end{equation}
\begin{equation}\label{eq2}
 b_e^{'}(n)=c_e(n),
\end{equation}
\begin{equation}\label{eq3}
 a(n)=b^{'}(n)=c(n).
\end{equation}

 Let $\varphi$ be any bijection constructed combinatorially from the set of odd partitions to the set of distinct partitions. For instance, $\varphi$  might be chosen to be the  Glaisher's map.

 We now prove (\ref{eq1}) first. Let $\lambda=\{(2k)_{2s-1},2a_1-1,\cdots,2a_m-1\}\in B_o^{'}(n)$.
  We shall construct a map $\theta$ sending $\lambda$ to a partition in $C_o(n)$.
  Let  $D=\varphi(\{2a_1-1,\cdots,2a_m-1\})$ be the image of the partition $\{2a_1-1,\cdots,2a_m-1\}$  under the map  $\varphi$. Note that $D$ is a distinct partition of $n-(2s-1)\cdot 2k$. We then send $\{(2k)_{2s-1}\}+D$ to
the target partition $\theta(\lambda)$ according the following rules:
\begin{equation*}
\left\{
\begin{array}{cc}
\{(2s-1)_{2k}\}+D,\quad & \text{if}\ 2s-1\notin D; \\
\{(2s-1)_{2k+1}\}+D\backslash\{2s-1\},\quad & \text{otherwise}. \\
\end{array}
\right.
\end{equation*}
Here $D\backslash\{2s-1\}$ denotes the multi-set (namely, the partition) obtained by removing exactly one part  $2s-1$. Obviously both of the two forms represent partitions in $C_o(n)$. One checks that this $\theta$ is injective, since two partitions must contain the same parts in $2k$ (and its multiplicity $2s-1$) and $D$  if they are sent into the same partition.

For example, let  $\varphi$  be the  Glaisher's map. Let  $\lambda=\{3,2_3,1_2\}$ be a partition of 11. Since $\varphi(\{3,1_2\})=\{3,2\}$ and $3$  appears as a part, $\theta(\lambda)=\{3_3,2\}$. Let $\lambda=\{5,3_2,2\}$ be a partition of 13. Since $\varphi(\{5,3_2\})=\{6,5\}$ and $1$  does not appear as a part,  $\theta(\lambda)=\{6,5,1_2\}$.

Conversely, for a partition $\lambda=\{(2s-1)_t,b_1,\cdots,b_m\}\in C_o(n)$ with $t\geq 2$ and $b_i\ne 2s-1$ distinct, we send it to $\{t_{2s-1},\varphi^{-1}(\{b_1,\cdots,b_m\})\}$ if  $t$ is even and $\{(t-1)_{2s-1},\varphi^{-1}(\{2s-1,b_1,\cdots,b_m\})\}$ if $t$ is odd. This map is clearly injective, and the above examples also work here. This finishes the combinatorial proof of (\ref{eq1}).

The combinatorial proof of (\ref{eq2}) is almost the  same and is left to the interested readers.

Now we turn to prove (\ref{eq3}).
By  definition we may rewrite (\ref{eq3}) as $b_o^{'}(n)=a(n)+b_e^{'}(n)$. Let $A(n)$ be the set of the pairs of  a distinct partition of $n$ and an even part in it. That is, $A(n)=\{(\lambda,2k)|\lambda\ is\ a\ distinct\ partition\ of\ n,\ 2k\in \lambda\}$. Clearly, $|A(n)|=a(n)$. We shall construct a map $\theta$ sending any partition $\lambda\in B_o^{'}(n)$ to a partition in $B_e^{'}(n)$,  or an element in $A(n)$.
  Let $\lambda=\{(2k)_{2s-1},2a_1-1,\cdots,2a_m-1\}\in B_o^{'}(n)$.
  Let  $D=\varphi(\{2a_1-1,\cdots,2a_m-1\})$ be the image of the partition under the map  $\varphi$.   We send $\lambda$ to  $\theta(\lambda)$ according the following rules:
\begin{equation}\label{eq8}
\left\{
\begin{array}{cc}
\{(2k)_{2s}\}+\varphi^{-1}(D\backslash\{2k\}),\quad &\text{if}\ 2k\in D; \\
\{(2k)_{2s-2}\}+\varphi^{-1}(D+\{2k\}),\quad &\text{if}\ 2k\notin D,\ s\not= 1; \\
 (\{2k+D\},2k) & \text{otherwise.}
\end{array}
\right.
\end{equation}
One checks that the first two forms in (\ref{eq8}) represent all  partitions in $B_e^{'}(n)$ and the third one is in $A(n)$. For two different partitions of $n$ in $B_o^{'}(n)$, if they are sent to the same partition in $B_e^{'}(n)$, then they should correspond to the same even number $2k$ and the same image $D$,  and thus also have the same multiplicity $2s-1$; if they are sent to the same element in $A(n)$, then they should be the same in partition ${2k+D}$ and corresponding even number $2k$, and thus be the same. Hence the map $\theta$ is injective.

For example,  let  $\varphi$ be the  Glaisher's map and for the  partition  $\lambda=\{5,2_3,1_3\}$ of 14, since $\varphi(\{5,1_3\})=\{5,2,1\}$ includes $2$ as a part, $\theta(\lambda)= \{2_4\}+\varphi^{-1}(\{5,1\})=\{5,2_4,1\}$.

    For the partition $\lambda=\{6_3,3_2\}$ of 24, since  $\varphi(\{3_2\})=\{6\}$ does not include $6$ as a part, $\{6_2\}+\varphi^{-1}(\{6,6\})=\{6_2,3_4\}$.

    For the partition $\lambda=\{5,3_2,2\}$ of 13, since $\varphi(\{5,3_2\})=\{6,5\}$ does  not include $2$ as a part and the multiplicity of $2$ in $\{5,3_2,2\}$ is 1, $\theta(\lambda)=(\{6,5,2\},2)$.

The converse map $\theta^{-1}$ can be obtained easily by starting from the forms in (\ref{eq8}) and
inverting the steps we took before. The details are left to the readers.

\end{document}